\newtheorem{lemma}{Lemma}
\newtheorem{remark}{Remark}
\newtheorem{theorem}{Theorem}
\let\mr=\mathrm
\begin{document}

\begin{frontmatter}



\title{Optimal order of uniform convergence for  finite element method on  Bakhvalov-type  meshes\tnoteref{funding} }

\tnotetext[funding]{
This research is supported by National Science Foundation of China (11771257,11601251), Shandong Provincial Natural Science Foundation, China (ZR2017MA003).
}

\author[label1] {Jin Zhang\corref{cor1}}
\author[label2] {Xiaowei Liu \fnref{cor2}}
\cortext[cor1] {Corresponding author: jinzhangalex@hotmail.com }
\fntext[cor2] {Email: xwliuvivi@hotmail.com }
\address[label1]{School of Mathematics and Statistics, Shandong Normal University,
Jinan 250014, China}
\address[label2]{School of Mathematics and Statistics, Qilu University of Technology (Shandong Academy of Sciences), Jinan 250353, China}

\begin{abstract}
We propose a new  analysis of convergence for a $k$th  order  ($k\ge 1$)  finite element method, which is applied  on Bakhvalov-type  meshes  to a singularly perturbed two-point boundary value problem. A novel interpolant is introduced, which  has a simple structure and  is easy to generalize. By means of this interpolant, we prove an optimal order of  uniform convergence with respect to the perturbation parameter.  
 Numerical experiments illustrate these theoretical results.
\end{abstract}

\begin{keyword}
Singular perturbation\sep Convection--diffusion equation \sep Finite element method \sep Bakhvalov mesh \sep Uniform convergence
\end{keyword}

\end{frontmatter}


%
%
%
\section{Introduction}
We consider the  two-point boundary value problem
\begin{equation}\label{eq:SPP-1d}
Lu:=-\varepsilon u''-b(x)u'+c(x)u=f(x)\quad \text{in $\Omega:=(0,1)$},\quad
u(0)=u(1)=0,
\end{equation}
where $\varepsilon$ is a positive parameter, $b$, $c$ and $f$ are sufficiently smooth functions such that  $b(x)\ge \beta>1$ on $\bar{\Omega}$ and  
\begin{equation}\label{eq:SPP-condition-1}
c(x)+\frac{1}{2}b'(x)\ge \gamma>0\quad \text{on $\bar{\Omega}$}
\end{equation}
with some constants $\beta$ and $\gamma$. The condition \eqref{eq:SPP-condition-1}  ensures that
the boundary value problem has a unique solution.
In the cases of interest the diffusion parameter $\varepsilon$
can be  arbitrarily small and satisfies $0<\varepsilon\ll 1$. Thus this problem is  \emph{singuarly perturbed} and its solution typically features a boundary layer of width $\mathcal{O}(\varepsilon\ln (1/\varepsilon))$ at $x = 0$ (see  \cite{Roo1Sty2Tob3:2008-Robust}).

Solutions to  singularly perturbed problems are characterized by the presence of boundary or interior \emph{layers}, where  solutions change  rapidly. 
Numerical solutions of these problems are of significant mathematical interest. 
Classical numerical methods are often inappropriate,   because  in practice  it is very unlikely that layers are fully resolved by  common  meshes.
Hence specialised numerical methods are designed to compute accurate approximate solutions in an efficient  way. For example,  standard numerical methods on layer-adapted meshes, which are fine in layer regions and standard outside,   are commonly used; see  \cite{Roo1Sty2Tob3:2008-Robust,Mil1Rio2Shi3:2012-Fitted} and  many references therein.   On these meshes, classical numerical methods    are  \emph{uniformly} convergent with respect to  the singular perturbation parameter; see \cite{Linb:2010-Layer}. 
 Among them, there are two kinds of popular grids: Bakhvalov-type meshes (B-meshes) and Shishkin-type meshes (S-meshes); see \cite{Linb:2010-Layer}. 

The accuracy of finite difference methods on these locally refined meshes has been extensively studied and sharp error estimations have been derived (see  \cite{Mil1Rio2Shi3:2012-Fitted,Kopteva:1999-the,Linb:2010-Layer}). 
For instance, in \cite{Linb:2010-Layer} the author presented convergence rates of
$\mathcal{O}(N^{-1})$ and $\mathcal{O}(N^{-1}\ln N)$
 for a first-order upwind difference scheme on Bakhvalov grid \cite{Bakhvalov:1969-Towards} and Shishkin grid \cite{Shishkin:1990-Grid}, respectively,  
where $N$ is the number of mesh intervals in each coordinate direction.  Usually, the performance of B-meshes is superior to that of S-meshes.  This advantage is more and more obvious when higher-order schemes are used.   Besides,  the width of the mesh subdomain used to resolve the layer is $\mathcal{O}(\varepsilon \ln (1/\varepsilon))$ for B-meshes and  $\mathcal{O}(\varepsilon \ln N)$ for S-meshes.  The former is independent of the mesh parameter $N$ and this property  will be important under certain circumstances.

For finite element methods, the development of numerical theories on B-meshes is completely different from one on S-meshes. 
On standard Shishkin meshes Stynes and O'Riordan \cite{Styn1ORior2:1997-uniformly} derived a  sharp  uniform  convergence in the energy norm for finite element method.   Henceforward  numerous articles deal with uniform convergence of finite element methods  on S-meshes; see e.g. \cite{Roo1Sty2Tob3:2008-Robust,Roos1Styn2:2015-Some,Zha1Liu2Yan3:2016-Optimal-EL,Zhan1Styn2:2017-Supercloseness,Liu1Sty2Zha3:2018-Supercloseness-CIP-EL,Teo1Brd2Fra3etc:2018-SDFEM,Russ1Styn2:2019-Balanced-norm} and the references therein. 
However, it is still open for the optimal  uniform convergence of finite element methods on B-meshes ( see   \cite[Question 4.1]{Roos1Styn2:2015-Some} for more details).

This dilemma  arises from the fact  that the standard Lagrange interpolant does not work for uniform convergence of finite element methods on B-meshes. More specifically, the Lagrange  interpolant  cannot provide enough stability in  $L^2$ norm on a special mesh interval, which lies in the fine part and  is adjacent to the coarse part of B-meshes.  In  \cite{Roos-2006-Error} and \cite{Brda1Zari2:2016-singularly} a  quasi-interpolant is used  and provides enough stability for  the optimal uniform convergence.  Unfortunately, in both articles the analysis  is limited to one dimension and linear finite  element.  It is hard  to extend  the analysis to higher dimensions  or higher-order finite  elements for singularly perturbed problems.

In this contribution we will study the optimal uniform convergence of 
a  $k$th  order  ($k\ge 1$)  finite element method  on Bakhvalov-type meshes.     A  novel interpolant is constructed by redefining the standard Lagrange interpolant to the solution.   This interpolant has a  simple structure and it can also be applied to higher-dimensional problems in a straightforward way.  By means of this novel function, we prove the optimal order of uniform convergence in a standard way.

The rest of the paper is organized as follows. In Section 2 we describe our regularity
on the solution $u$ to \eqref{eq:SPP-1d}, introduce two  Bakhvalov-type meshes and define the  finite element method.  Some  preliminary results for the subsequent analysis are also derived in this section.  In Section 3 we construct and analyze an interpolant $\Pi u$ for the uniform convergence on B-meshes. In Section 4 uniform convergence is obtained by means of the interpolant $\Pi u$ and careful derivations of the convective term in the bilinear form. 
In Section 5, numerical
results illustrate our theoretical bounds.

 We use the standard Sobolev spaces $W^{m,p}(D)$, $H^m(D)=W^{m,2}(D)$, $H^m_0(D)$
 for nonnegative
integers $m$ and $1\le p \le \infty$.  Here $D$ is any measurable subset of $\Omega$. We denote by $\vert \cdot \vert_{W^{m,p}(D)}$ and $\Vert \cdot \Vert_{W^{m,p}(D)}$ the semi-norms and the norms in $W^{m,p}(D)$, respectively. On $H^m(D)$, $\vert \cdot \vert_{m,D}$ and $\Vert \cdot \Vert_{m,D}$ are the usual Sobolev semi-norm and norm. Denote by $\Vert \cdot \Vert_{L^{p}(D)}$  the norms in the Lebesgue spaces $L^p(D)$. We use the notation $(\cdot,\cdot)_D$ and $\Vert \cdot \Vert_{D}$ for  the $L^2(D)$-inner product and the $L^2(D)$-norm, respectively. When $D=\Omega$ we drop the subscript $D$ from the  notation for simplicity.
Throughout the article, all constants $C$ and $C_i$ are independent of
$\varepsilon$ and the mesh parameter $N$; unsubscripted constants $C$ are generic and may take different values in different formulas while subscripted constants $C_i$ are fixed. 

%
%
%

\section{Regularity, Bakhvalov mesh and finite element method}\label{sec:regularity,mesh,method}
\subsection{Regularity of the solution }\label{sec:regularity}
Information about higher-order derivatives of  the solution $u$ of \eqref{eq:SPP-1d} are usually needed by uniform convergence  of finite element methods. Such estimations appeared in   \cite[Lemma 1.9]{Roo1Sty2Tob3:2008-Robust} and are reproduced in the following lemma.
\begin{lemma}\label{lem:regularity}
Let $k$ be some positive integer. Assume that \eqref{eq:SPP-condition-1} holds true and $b,c,f$ are sufficiently smooth. The solution $u$ of \eqref{eq:SPP-1d} can be decomposed  into
\begin{equation}\label{eq:decomposition}
u=S+E,
\end{equation}
where the smooth part $S$ and  the layer part $E$ satisfy $LS=f$ and $LE=0$, respectively. Furthermore, one has
\begin{equation}\label{eq:regularity}
|S^{(l)}(x)|\le C, \quad
|E^{(l)}(x)|\le C \varepsilon^{-l}\exp\left(-\frac{ x }{ \varepsilon }  \right)\quad \text{ for $0\le l\le k+1$}.
\end{equation}

\end{lemma}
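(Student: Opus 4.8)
The plan is to proceed by the standard asymptotic-decomposition argument for one-dimensional convection--diffusion problems. First I would construct the smooth part $S$ via a truncated asymptotic expansion in powers of $\varepsilon$: write $S \sim \sum_{j=0}^{k+1} \varepsilon^j S_j$, where $S_0$ solves the reduced (first-order) problem $-b S_0' + c S_0 = f$ with the boundary condition $S_0(1)=0$ inherited at the outflow point, and each subsequent $S_j$ solves $-b S_j' + c S_j = S_{j-1}''$ with $S_j(1)=0$. Since $b \ge \beta > 1$ and $b,c,f$ are smooth, each $S_j$ is a smooth function on $\bar\Omega$ with all derivatives bounded independently of $\varepsilon$ (this is just classical ODE regularity for a non-degenerate first-order linear equation). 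Defining $S$ to be the genuine solution of $LS=f$ with boundary data $S(0)$, $S(1)=0$ chosen so that $S$ agrees with the expansion, one checks $L(S-\sum \varepsilon^j S_j)=\varepsilon^{k+2}S_{k+1}''=\mathcal{O}(\varepsilon^{k+2})$, and a stability/maximum-principle estimate for $L$ gives $|S^{(l)}(x)|\le C$ for $0\le l\le k+1$.

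Next I would define the layer part simply by $E := u - S$, so that $LE = Lu - LS = f - f = 0$ automatically, and $E(0) = -S(0)$, $E(1) = 0$. It remains to prove the pointwise exponential bound $|E^{(l)}(x)|\le C\varepsilon^{-l}\exp(-x/\varepsilon)$. The cleanest route is a barrier-function argument: the function $\exp(-\beta x/\varepsilon)$ (or, to be safe near $x=1$, a suitable combination) satisfies $L[\exp(-\beta x/\varepsilon)] \ge 0$ for $\varepsilon$ small because the dominant term is $-\varepsilon(\beta/\varepsilon)^2 + b(\beta/\varepsilon) \ge (\beta^2/\varepsilon)(1 - \varepsilon/\beta \cdot \text{something}) > 0$ using $b\ge\beta$; comparing $E$ against $C\exp(-\beta x/\varepsilon)$ via the maximum principle (valid under \eqref{eq:SPP-condition-1}) yields $|E(x)|\le C\exp(-\beta x/\varepsilon)\le C\exp(-x/\varepsilon)$ since $\beta>1$. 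The derivative bounds then follow by differentiating the relation $\varepsilon E'' = -bE' + cE$ and bootstrapping: from the equation one controls $E''$ in terms of $E'$ and $E$, and an integrating-factor representation of the first-order part $-bE'+cE=\varepsilon E''$, together with the decay of $E$, gives $|E'(x)|\le C\varepsilon^{-1}\exp(-x/\varepsilon)$; repeated differentiation of the ODE and induction on $l$ produce the factor $\varepsilon^{-l}$ at each stage.

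The main obstacle is the derivative estimates for $E$, specifically getting the sharp powers $\varepsilon^{-l}$ uniformly up to $l=k+1$. Differentiating the equation $l$ times introduces lower-order derivatives of $b,c$ (harmless, since these are bounded) but one must carefully track how the $\varepsilon^{-1}$ factors accumulate through the integrating-factor solves of the first-order operator, and one needs the estimates to hold up to the boundary $x=1$ where a secondary (weak) layer of the adjoint-type could in principle appear; handling that typically requires either a two-sided barrier or a direct integral representation with the Green's function of $L$, whose kernel decays like $\varepsilon^{-1}\exp(-|x-t|\,\beta/\varepsilon)$ in the relevant regime. Since this lemma is quoted verbatim from \cite[Lemma~1.9]{Roo1Sty2Tob3:2008-Robust}, in practice I would simply cite that reference; the sketch above indicates the argument one would reconstruct if needed.
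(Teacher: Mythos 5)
The paper does not prove this lemma at all---it reproduces it verbatim from \cite[Lemma~1.9]{Roo1Sty2Tob3:2008-Robust}, which is exactly the route you identify in your final sentence, so your approach coincides with the paper's. Your sketch of the underlying argument (truncated asymptotic expansion for $S$, barrier/maximum-principle bound plus bootstrapping of the ODE for $E$) is the standard one from that reference; the only point to watch is that the comparison-principle step needs $c\ge 0$, which is not assumed here (only \eqref{eq:SPP-condition-1} is), so one must first arrange it by a change of variable such as $u=e^{\lambda x}\tilde u$ before comparing $E$ against $Ce^{-\beta x/\varepsilon}$.
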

Note  $k$ depends on the regularity of the coefficients, in particular
\eqref{eq:regularity} holds for any $q\in\mathbb{N}$ if $b,c,f\in C^{\infty}[0,1]$. 

\subsection{Bakhvalov mesh }
Bakhvalov mesh  first appeared in \cite{Bakhvalov:1969-Towards} and is constructed according to layer functions like $E$ in Lemma \ref{lem:regularity}. 
Its mesh generating function is piecewise and belongs to $C^1$.
Its breakpoint, which separates  the mesh generating function, must be solved by a nonlinear equation and  usually is not explicitly known ( see  \cite[Part I \S 2.4.1]{Roo1Sty2Tob3:2008-Robust}). 
 
In this article, we focus on two Bakhvalov-type  meshes  introduced in \cite{Roos-2006-Error} and \cite{Kopteva:1999-the,Kopt1Save2:2011-Pointwise}. Their breakpoints are known already, and both mesh generating functions do not belong to $C^1$ any longer. 
In \cite{Roos-2006-Error}  the Bakhvalov mesh is defined by
\begin{equation}\label{eq:Bakhvalov mesh-Roos}
x=\psi(t)=
\left\{
\begin{split}
& -\sigma \varepsilon \ln ( 1-2(1-\varepsilon)t ) \quad &&\text{for $t\in [0,1/2]$},\\
&1-d (1-t)\quad &&\text{for $t\in (1/2,1]$},
\end{split}
\right.
\end{equation}
where  $\sigma\ge k+1$ with  some positive integer  $k$   and $d$ is used to ensure the continuity of $\psi(t)$ at $t=1/2$.  The  mesh generating function in \cite{Kopteva:1999-the,Kopt1Save2:2011-Pointwise}   is defined by
\begin{equation}\label{eq:Bakhvalov mesh-Kopteva}
x=\varphi(t)=
\left\{
\begin{split}
&-\sigma \varepsilon  \ln(1-2t)  \quad &&\text{for $t\in [0,\vartheta]$},\\
&1-d_1 (1-t)\quad &&\text{for $t\in (\vartheta,1]$},
\end{split}
\right.
\end{equation}
where $\sigma\ge k+1$, $\vartheta=1/2-C_1\varepsilon$ with some positive constant $C_1$ independent of $\varepsilon$ and $N$,  $d_1$ is chosen so that $\varphi(t)$  is continuous at $t=\vartheta$. 
The original Bakhvalov mesh can be recovered from \eqref{eq:Bakhvalov mesh-Kopteva}
by setting $\vartheta=1/2-\mathcal{C}(\varepsilon)\varepsilon$, where 
\begin{equation}\label{eq:original-Bakhvalov-mesh}
0<C_2\le \mathcal{C}(\varepsilon) \le C_3.
\end{equation}
For technical reasons, we assume 
$C_1\le 1/(\varepsilon N )$ and therefore $1/2-N^{-1}\le \vartheta <1/2$.  We also assume  that $\varepsilon\le N^{-1}$ in our analysis, as is generally the case in practice. If $\varepsilon> N^{-1}$, one sets $\psi(t)=\varphi(t)=t$, which generate uniform meshes. 

Assume that $N/2$ is a positive integer and define the mesh points $x_i=\psi(i/N)$ or $x_i=\varphi(i/N)$ for $i=0,1,\ldots,N$. For both Bakhvalov meshes one usually  has $x_{N/2}\le 1/2$. Denote an arbitrary subinterval $[x_i,x_{i+1}]$ by $I_i$,  its length by $h_i=x_{i+1}-x_i$ and a generic subinterval by $I$.

\subsection{The finite element  method}
The weak form of problem \eqref{eq:SPP-1d} is to find $u\in H^1_0(\Omega)$ such that
\begin{equation}\label{eq:weak form-1d}
a(u,v)=(f,v)\quad \forall v\in H^1_0(\Omega),
\end{equation}
where 
$
a(u,v):=\varepsilon ( u', v')-(b u', v)
+(cu,v)
$. Note that the variational formulation \eqref{eq:weak form-1d} has a unique solution by means of the Lax-Milgram lemma.

 Define the $C^0$ finite element space on the Bakhvalov meshes
\begin{equation*}\label{eq:VN}
V^{N}=\{w\in C(\bar{\Omega}):\; w(0)=w(1)=0,\;
 \text{$w|_{I_i}\in P_k(I_i)$ for $i=0,\ldots,N-1$} \}.
\end{equation*}
The finite element method for \eqref{eq:weak form-1d} reads as
\begin{equation}\label{eq:FE-1d}
a(u^N,v^N)=(f,v^N)\quad \forall v^N\in V^N.
\end{equation}
The natural norm associated with $a(\cdot,\cdot)$ is defined by
\begin{equation*}
\Vert v \Vert_{ \varepsilon}:=
\left\{\varepsilon \vert  v \vert^2_1+ \Vert v \Vert^2\right\}^{1/2} \quad \forall v\in H^1(\Omega).
\end{equation*}
Using \eqref{eq:SPP-condition-1}, it is easy to see that   one has the coercivity
\begin{equation}\label{eq:coercivity}
a(v^N,v^N) \ge \alpha \Vert v^N \Vert_{\varepsilon}^2\quad \text{for all $v^N\in V^N$}
\end{equation}
with $\alpha=\min \{1,\gamma\}$. It follows that $u^N$ is well defined by \eqref{eq:FE-1d} (see \cite{Bren1Scot2:2008-mathematical} and references therein).

%
%
%
\subsection{Preliminary results of Bakhvalov meshes }\label{sec:convergence-2}
In this subsection, we present some important properties of the Bakhvalov meshes and the layer function $E$, which are necessary for our uniform convergence.

We present some properties about the step sizes of Bakhavlov meshes as follows.
\begin{lemma}\label{lem:Bakhvlov-mesh}
For  Bakhvalov mesh  \eqref{eq:Bakhvalov mesh-Roos}, one has
\begin{align}
&h_0\le h_1\le \ldots \le h_{N/2-2},\label{eq:mesh-1}\\
&\frac{1}{4}\sigma \varepsilon \le h_{N/2-2}\le \sigma \varepsilon,\label{eq:mesh-3}\\
&\frac{1}{2}\sigma \varepsilon \le h_{N/2-1}\le 2\sigma N^{-1},\label{eq:mesh-4}\\
&N^{-1}\le h_i\le 2N^{-1}\quad  N/2\le i \le N-1.\label{eq:mesh-5}
\end{align}
On Bakhvalov mesh  \eqref{eq:Bakhvalov mesh-Kopteva}, bounds   analogous to \eqref{eq:mesh-1}--\eqref{eq:mesh-5} also hold.
\end{lemma}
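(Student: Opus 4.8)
The plan is to read off everything directly from the explicit mesh generating functions: convexity/monotonicity of the generating function on the fine part yields the ordering \eqref{eq:mesh-1}, affineness on the coarse part yields \eqref{eq:mesh-5}, and for the three remaining quantitative bounds \eqref{eq:mesh-3}, \eqref{eq:mesh-4} one computes the relevant step sizes in closed form and estimates the logarithms using only $\ln(1+x)\le x$, the numerical inequalities $\tfrac14<\ln\tfrac32$, $\ln 2<1$, $\tfrac12<\ln 2$, and the standing assumptions $\varepsilon\le N^{-1}$ and (for \eqref{eq:Bakhvalov mesh-Kopteva}) $C_1\le 1/(\varepsilon N)$.

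\textbf{Mesh \eqref{eq:Bakhvalov mesh-Roos}.} On $[0,1/2]$ one has $\psi'(t)=2\sigma\varepsilon(1-\varepsilon)\bigl(1-2(1-\varepsilon)t\bigr)^{-1}>0$ and $\psi''(t)>0$, so $\psi$ is increasing and convex there; since the parameter nodes $i/N$ are equally spaced, $t\mapsto\psi(t+N^{-1})-\psi(t)$ is nondecreasing, which gives $h_0\le h_1\le\cdots\le h_{N/2-2}$, i.e.\ \eqref{eq:mesh-1}. On $(1/2,1]$ the function $\psi$ is affine, hence $h_i=d/N$ for $N/2\le i\le N-1$; imposing continuity at $t=1/2$ gives $d=2\bigl(1-\sigma\varepsilon\ln(1/\varepsilon)\bigr)$, and since $0<\sigma\varepsilon\ln(1/\varepsilon)\le\tfrac12$ in the regime considered (here $N$ is large because $\varepsilon\le N^{-1}$), one has $d\in[1,2]$ and thus \eqref{eq:mesh-5}.

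\textbf{The two transition intervals.} Evaluating $\psi$ at the three relevant nodes gives the closed forms
\[
h_{N/2-2}=\sigma\varepsilon\ln\frac{\varepsilon+(4/N)(1-\varepsilon)}{\varepsilon+(2/N)(1-\varepsilon)},
\qquad
h_{N/2-1}=\sigma\varepsilon\ln\Bigl(1+\frac{2(1-\varepsilon)}{N\varepsilon}\Bigr).
\]
Writing the first ratio as $1+\dfrac{(2/N)(1-\varepsilon)}{\varepsilon+(2/N)(1-\varepsilon)}$ and using $\varepsilon\le N^{-1}$ (so that $(2/N)(1-\varepsilon)\ge N^{-1}\ge\varepsilon$) shows it lies in $[\tfrac32,2]$, whence $\tfrac14\sigma\varepsilon\le\sigma\varepsilon\ln\tfrac32\le h_{N/2-2}\le\sigma\varepsilon\ln 2\le\sigma\varepsilon$, which is \eqref{eq:mesh-3}. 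For the second, $\varepsilon\le N^{-1}$ gives $1+2(1-\varepsilon)/(N\varepsilon)\ge 1+2(1-\varepsilon)\ge2$, so $h_{N/2-1}\ge\sigma\varepsilon\ln 2\ge\tfrac12\sigma\varepsilon$, while $\ln(1+x)\le x$ yields $h_{N/2-1}\le 2\sigma(1-\varepsilon)/N\le 2\sigma N^{-1}$; this is \eqref{eq:mesh-4}.

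\textbf{Mesh \eqref{eq:Bakhvalov mesh-Kopteva} and the main obstacle.} Because $1/2-N^{-1}\le\vartheta<1/2$ we have $t_{N/2-1}=1/2-N^{-1}\le\vartheta<1/2=t_{N/2}$, so $\varphi$ is logarithmic (increasing and convex) at every node $i/N$ with $i\le N/2-1$ and affine at the remaining nodes; this gives at once the analogues of \eqref{eq:mesh-1} and \eqref{eq:mesh-5}, and since $1-2t_{N/2-1}=2/N$, $1-2t_{N/2-2}=4/N$, the exact identity $h_{N/2-2}=\sigma\varepsilon\ln 2\in(\tfrac14\sigma\varepsilon,\sigma\varepsilon)$. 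The only genuinely new feature — and the main technical point — is that the breakpoint $\vartheta$ need not be a mesh node, so $I_{N/2-1}$ straddles the two branches: here $x_{N/2-1}=\varphi(1/2-N^{-1})=\sigma\varepsilon\ln(N/2)$ while $x_{N/2}=1-d_1/2$, with $d_1$ fixed by continuity at $t=\vartheta$. Substituting $d_1$ and using $0<2C_1\varepsilon\le 2/N$, $\varepsilon\le N^{-1}$ (together with $0<C_2\le\mathcal C(\varepsilon)\le C_3$ when the original Bakhvalov mesh is recovered via \eqref{eq:original-Bakhvalov-mesh}) and the elementary bound $s\ln(1/s)\le 1/e$ on $(0,1]$ reduces $h_{N/2-1}$ to the analogue of \eqref{eq:mesh-4}, with constants depending only on $\sigma$ and $C_1$ (resp.\ $\sigma,C_2,C_3$). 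No deep idea is required; the work is entirely in bookkeeping the constants around the transition interval, and the roles of the assumptions $\varepsilon\le N^{-1}$ and $C_1\le 1/(\varepsilon N)$ are precisely to make these elementary estimates close.
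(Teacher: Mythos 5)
Your proof is correct and takes essentially the same approach as the paper: both arguments read the step sizes directly off the explicit mesh generating functions and bound them by elementary means (the paper via the integral representation $h_i=\int_{i/N}^{(i+1)/N}\psi'(t)\,\mathrm{d}t$ and the monotone integrand, you via closed-form logarithmic differences and convexity), under the same standing assumptions $\varepsilon\le N^{-1}$ and $x_{N/2}\le 1/2$. Your treatment of the transition interval of mesh \eqref{eq:Bakhvalov mesh-Kopteva} is in fact more detailed than the paper's, which only asserts that this mesh ``can be similarly analyzed.''
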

\begin{proof}
We just consider Bakhvalov mesh \eqref{eq:Bakhvalov mesh-Roos} and the other mesh can be similarly analyzed. 
Recalling that  $x_{N/2}\le 1/2$ and the Bakhvalov mesh separates $[x_{N/2},1]$ into $N/2$ uniform subintervals, one obtains \eqref{eq:mesh-5}. For $0\le i \le N/2-1$,  one has
\begin{equation*}
h_i=x_{i+1}-x_i=\int_{i/N}^{(i+1)/N} \sigma \varepsilon \frac{2(1-\varepsilon)}{1-2(1-\varepsilon)t}\mr{d}t
\end{equation*}
and 
\begin{equation}\label{eq:h-in-layer-1}
\sigma\varepsilon\frac{2(1-\varepsilon)  }{1-2(1-\varepsilon)iN^{-1}} N^{-1} \le h_i\le   \sigma\varepsilon \frac{2(1-\varepsilon)   }{1-2(1-\varepsilon)(i+1)N^{-1}} N^{-1}. 
\end{equation}
From \eqref{eq:h-in-layer-1}, we can prove \eqref{eq:mesh-1}, \eqref{eq:mesh-3} and \eqref{eq:mesh-4} easily.  

\end{proof}

We collect some bounds of the layer function $E$ and the function $e^{-x/\varepsilon}$ on the Bakhvalov meshes in the following lemma. 
\begin{lemma}
On  Bakhvalov meshes  \eqref{eq:Bakhvalov mesh-Roos} and \eqref{eq:Bakhvalov mesh-Kopteva}, one has  
\begin{align}
&|E(x_{N/2-1})|\le C N^{-\sigma},\quad |E(x_{N/2})|\le C \varepsilon^{\sigma},\label{eq:layer-function-1}\\
&\Vert E \Vert_{ I_{N/2-1}}+\varepsilon\Vert E' \Vert_{ I_{N/2-1}}\le C\varepsilon^{1/2}N^{-\sigma}, \label{eq:layer-function-2}\\
&\Vert E' \Vert_{[x_{N/2},x_N]}\le C \varepsilon^{\sigma-1/2}. \label{eq:layer-function-22}
\end{align}
For $0\le i\le N/2-2$  and $0\le \mu\le \sigma$, we have 
\begin{equation}\label{eq:layer-function-4} 
h_{i}^{ \mu}  \max\limits_{x_i\le x \le x_{i+1} }e^{- x/\varepsilon}=h_{i}^{ \mu} e^{- x_i/\varepsilon}\le C\varepsilon^{\mu} N^{-\mu}.
\end{equation}
\end{lemma}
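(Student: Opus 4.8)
The plan is to reduce every estimate to the pointwise bounds $|E(x)|\le Ce^{-x/\varepsilon}$ and $|E'(x)|\le C\varepsilon^{-1}e^{-x/\varepsilon}$ from Lemma~\ref{lem:regularity}, combined with the monotonicity of $e^{-x/\varepsilon}$ and the explicit mesh-generating functions \eqref{eq:Bakhvalov mesh-Roos}--\eqref{eq:Bakhvalov mesh-Kopteva}. I would carry out the computation in full for the Roos mesh \eqref{eq:Bakhvalov mesh-Roos} and only indicate the extra step needed for \eqref{eq:Bakhvalov mesh-Kopteva}. The first move is to locate the two special mesh points: plugging $t=(N/2-1)/N$ and $t=1/2$ into $\psi$ gives $x_{N/2-1}=-\sigma\varepsilon\ln\!\big(\varepsilon+2(1-\varepsilon)N^{-1}\big)$ and $x_{N/2}=-\sigma\varepsilon\ln\varepsilon$, whence $e^{-x_{N/2-1}/\varepsilon}=\big(\varepsilon+2(1-\varepsilon)N^{-1}\big)^{\sigma}$ and $e^{-x_{N/2}/\varepsilon}=\varepsilon^{\sigma}$. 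Since $\varepsilon\le N^{-1}$, the first quantity is at most $(3N^{-1})^{\sigma}$, and \eqref{eq:layer-function-1} follows at once from $|E(x)|\le Ce^{-x/\varepsilon}$. For the Kopteva mesh the point $x_{N/2-1}$ still lies in the fine part because $1/2-N^{-1}\le\vartheta$, whereas $x_{N/2}$ lies in the coarse part; there I would instead use monotonicity of $e^{-x/\varepsilon}$ together with its value at the breakpoint $\varphi(\vartheta)=-\sigma\varepsilon\ln(2C_1\varepsilon)$ to get $e^{-x_{N/2}/\varepsilon}\le(2C_1\varepsilon)^{\sigma}\le C\varepsilon^{\sigma}$, using $C_1\le 1/(\varepsilon N)$.

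For the $L^2$-type bounds \eqref{eq:layer-function-2} and \eqref{eq:layer-function-22} the crude estimate ``maximum of $|E|$ times the square root of the interval length'' is too weak, since by \eqref{eq:mesh-4} the width $h_{N/2-1}$ may be of order $N^{-1}$ rather than $\varepsilon$. Instead I would integrate the exponential over a \emph{semi-infinite} interval, using $\int_{a}^{\infty}e^{-2x/\varepsilon}\,\mathrm{d}x=\tfrac12\varepsilon\,e^{-2a/\varepsilon}$. This yields $\Vert E\Vert_{I_{N/2-1}}+\varepsilon\Vert E'\Vert_{I_{N/2-1}}\le C\varepsilon^{1/2}e^{-x_{N/2-1}/\varepsilon}$ and $\Vert E'\Vert_{[x_{N/2},x_N]}\le C\varepsilon^{-1/2}e^{-x_{N/2}/\varepsilon}$, and the pointwise values found above turn these into \eqref{eq:layer-function-2} and \eqref{eq:layer-function-22}.

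For \eqref{eq:layer-function-4} the equality is simply the monotonicity of $e^{-x/\varepsilon}$. For the inequality I would write $s_i:=1-2(1-\varepsilon)iN^{-1}$, so that $e^{-x_i/\varepsilon}=s_i^{\sigma}$, and use the bound $h_i\le 2\sigma\varepsilon\,s_{i+1}^{-1}N^{-1}$ already contained in \eqref{eq:h-in-layer-1}. Then $h_i^{\mu}e^{-x_i/\varepsilon}\le(2\sigma)^{\mu}\varepsilon^{\mu}N^{-\mu}\,s_i^{\sigma}s_{i+1}^{-\mu}$; the restriction $i\le N/2-2$ forces $s_{i+1}\ge 2(1-\varepsilon)N^{-1}$ and hence $s_i=s_{i+1}+2(1-\varepsilon)N^{-1}\le 2s_{i+1}$, while $\mu\le\sigma$ and $s_i\le1$ give $s_i^{\sigma}\le s_i^{\mu}$; combining these shows $s_i^{\sigma}s_{i+1}^{-\mu}\le 2^{\mu}$, and \eqref{eq:layer-function-4} follows with a constant depending only on $\sigma$.

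The step I expect to need the most care is making the powers of $\varepsilon$ come out exactly: in \eqref{eq:layer-function-2}--\eqref{eq:layer-function-22} the half-power of $\varepsilon$ is produced solely by extending the integral of the exponential to $+\infty$ rather than over the short mesh cell, and in \eqref{eq:layer-function-4} the $\varepsilon$- and $N$-independent constant rests on the elementary but essential fact that $s_{i+1}$ cannot drop below the spacing of the last fine cell --- which is exactly why the admissible range stops at $i=N/2-2$ and not at $i=N/2-1$. For the Kopteva mesh one additionally has to check that $\varphi(\vartheta)$ and $x_{N/2-1}$ behave as their Roos-mesh counterparts, which is guaranteed by $1/2-N^{-1}\le\vartheta<1/2$ and $C_1\le 1/(\varepsilon N)$.
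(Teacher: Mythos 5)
Your proposal is correct and follows essentially the same route as the paper: the pointwise bounds \eqref{eq:layer-function-1}--\eqref{eq:layer-function-22} are obtained directly from \eqref{eq:regularity} via the explicit values $e^{-x_{N/2-1}/\varepsilon}$ and $e^{-x_{N/2}/\varepsilon}$ (the paper compresses this to ``direct calculations,'' and your semi-infinite integral $\int_a^\infty e^{-2x/\varepsilon}\,\mathrm{d}x=\tfrac12\varepsilon e^{-2a/\varepsilon}$ is exactly the right way to produce the $\varepsilon^{1/2}$ factor), while your treatment of \eqref{eq:layer-function-4} with $s_i=1-2(1-\varepsilon)i/N$, the factorization $s_i^{\sigma}s_{i+1}^{-\mu}=s_i^{\sigma-\mu}(s_i/s_{i+1})^{\mu}\le 2^{\mu}$, and the observation that $i\le N/2-2$ forces $s_i\le 2s_{i+1}$ reproduces the paper's argument with the constants $C_1^*,C_2^*,C_3^*$. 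The only cosmetic remark is that for the Kopteva mesh the bound $(2C_1\varepsilon)^{\sigma}\le C\varepsilon^{\sigma}$ needs only that $C_1$ is a fixed constant, not the technical assumption $C_1\le 1/(\varepsilon N)$.
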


\begin{proof}
We just consider Bakhvalov mesh \eqref{eq:Bakhvalov mesh-Roos} and the   mesh  \eqref{eq:Bakhvalov mesh-Kopteva} can be similarly analyzed. 

Recalling $\varepsilon\le N^{-1}$, we prove \eqref{eq:layer-function-1}, \eqref{eq:layer-function-2} and \eqref{eq:layer-function-22} directly from \eqref{eq:regularity}. 

Let $0\le i\le N/2-2$. 
From \eqref{eq:Bakhvalov mesh-Roos} one has
$$
-\sigma \varepsilon \ln (1-2(1-\varepsilon)i/N)=x_i\le x \le x_{i+1}=-\sigma \varepsilon \ln (1-2(1-\varepsilon)(i+1)/N),
$$
and for $x\in [x_{i},x_{i+1}]$
\begin{equation}\label{e-x-varepsilon}
(1-2(1-\varepsilon)(i+1)/N)^{\sigma}\le e^{- x/\varepsilon}\le e^{- x_i/\varepsilon}=(1-2(1-\varepsilon)i/N)^{\sigma}.
\end{equation}
From \eqref{eq:h-in-layer-1} and \eqref{e-x-varepsilon}, we have
\begin{equation*}
\begin{split}
&h_{i}^{\mu}\max\limits_{x_i\le x \le x_{i+1} }e^{-x/\varepsilon}
\le 
C^*_1 \varepsilon^{ \mu}N^{-\mu}  (1-2(1-\varepsilon)i/N)^{\sigma}(1-2(1-\varepsilon)(i+1)/N)^{-\mu}\\
\le &
C^*_1 \varepsilon^{ \mu} N^{-\mu}  ( 1-2(1-\varepsilon)i/N )^{\sigma-\mu}\left(\frac{ 1-2(1-\varepsilon)i/N }{ 1-2(1-\varepsilon)(i+1)/N } \right)^{\mu}\\
\le &
C^*_1C_2^*C^*_3 \varepsilon^{ \mu} N^{-\mu},
\end{split}
\end{equation*}
where $C^*_1=(2\sigma(1-\varepsilon))^{\mu}\le (2\sigma)^{\mu}$
 and  for $0\le i\le N/2-2$
 $$
C_2^*=( 1-2(1-\varepsilon)i/N )^{\sigma-\mu}\le 1,\; C_3^*=\left(\frac{ 1-2(1-\varepsilon)i/N }{ 1-2(1-\varepsilon)(i+1)/N } \right)^{\mu}
\le 2^{\mu}.
$$
Thus  \eqref{eq:layer-function-4} is proved.

\end{proof}
\section{Interpolation operator and interpolation errors}
Now  a new interpolation operator  is introduced, which is used for our uniform convergence. Set $x_{i+j/k}:=x_{i}+(j/k)h_i$ for $i=0,1,\ldots,N-1$ and $j=1,\ldots,k-1$. For any $v\in C^0(\bar{\Omega})$ its Lagrange interpolant $v^I\in V^{N}$ on each Bakhvalov mesh   is defined by 
$$
v^I=\sum_{i=0}^N v(x_i)\theta_i(x)+\sum_{i=0}^{N-1} \sum_{j=1}^{k-1}v(x_{i+j/k})\theta_{i+j/k}(x),
$$
where $\theta_i(x)$, $\theta_{i+j/k}(x)$ is the piecewise $k$th order Lagrange basis function satisfying the well-known delta properties associated with the nodes $x_i$ and $x_{i+j/k}$, respectively.  For the solution $u$ to \eqref{eq:SPP-1d}, recall \eqref{eq:decomposition} in Lemma \ref{lem:regularity} and define the interpolant $\Pi u$ 
 by
\begin{equation}\label{eq:interpolant-u-10-27}
\Pi u=S^I+\pi E,
\end{equation}
where $S^I$ is the Lagrange interpolant to $S$ and 
\begin{equation}\label{eq:interpolant-E-1}
(\pi E)(x)= \sum_{i=0,i\ne N/2-1}^{N}E(x_i)\theta_i(x)+ \sum_{i=0,i\ne N/2-1}^{N-1} \sum_{j=1}^{k-1}v(x_{i+j/k})\theta_{i+j/k}(x).
\end{equation}
Define 
\begin{equation}\label{eq:omega}
(\mathcal{P} E)(x)=E(x_{N/2-1})\theta_{N/2-1}(x)+\sum_{j=1}^{k-1}E(x_{N/2-1+j/k})\theta_{N/2-1+j/k}(x),
\end{equation}
and clearly  we have
\begin{align}
&(\pi E)(x)=E^I-(\mathcal{P} E)(x),\quad \Pi u=u^I-(\mathcal{P} E)(x),\label{eq:pi-E-1}\\
&\pi E|_{[x_0,x_{N/2-2}]\cup [x_{N/2},x_N] }=E^I|_{[x_0,x_{N/2-2}]\cup [x_{N/2},x_N] },\label{eq:pi-E-2}\\
& \Pi u\in V^N.\nonumber
\end{align}

Interpolation theories in Sobolev spaces \cite[Theorem 3.1.4]{Ciarlet:2002-finite} tell us that
\begin{equation}\label{eq:interpolation-theory}
\Vert v-v^I \Vert_{W^{l,q}(I_i)}\le C h_i^{k+1-l+1/q-1/p}\vert v \vert_{W^{k+1,p}(I_i)},
\end{equation}
for all $v\in W^{k+1,p}(I)$,  where $i=0,1,\ldots,N-1$, $l=0,1$ and $1\le p,q\le \infty$.

\begin{lemma}\label{lem:interpolation-error}
On   Bakhvalov meshes \eqref{eq:Bakhvalov mesh-Roos} and \eqref{eq:Bakhvalov mesh-Kopteva}, one has  
\begin{align}
&\Vert E-E^I \Vert_{L^{\infty}(\Omega)}+\Vert S-S^I  \Vert_{L^{\infty}(\Omega)}+\Vert u-u^I  \Vert_{L^{\infty}(\Omega)}\le 
CN^{-(k+1)},\label{eq:interpolation-error-1}\\
&\Vert E-E^I \Vert +\Vert S-S^I  \Vert+\Vert u-u^I  \Vert\le 
CN^{-(k+1)},\label{eq:interpolation-error-11}\\
&\Vert  E^I  \Vert_{I_{N/2-1} }\le Ch_{N/2-1}^{1/2}N^{-\sigma},\; \Vert  E^I  \Vert_{[x_{N/2},x_N] }\le C\varepsilon^{\sigma},
\label{eq:interpolation-error-2}\\
&\Vert E-E^I \Vert_{\varepsilon}+\Vert u-u^I \Vert_{\varepsilon}\le C N^{-k}, \label{eq:interpolation-error-3}\\
&\Vert (\mathcal{P} E)(x) \Vert_{\varepsilon} \le CN^{-\sigma},\label{eq:omega-energy-norm}
\end{align}
where $(\mathcal{P} E)(x)$ is defined in \eqref{eq:omega}.
\end{lemma}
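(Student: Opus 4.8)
The plan is to write $u=S+E$ and estimate $S-S^I$, $E-E^I$, and the correction $\mathcal{P}E$ separately, in each case feeding the derivative bounds of Lemma~\ref{lem:regularity} into the classical interpolation estimate \eqref{eq:interpolation-theory} and then absorbing the negative powers of $\varepsilon$ with the mesh data of Lemma~\ref{lem:Bakhvlov-mesh} and the layer bounds \eqref{eq:layer-function-1}--\eqref{eq:layer-function-4}. For $S$, since $|S^{(l)}|\le C$ for $0\le l\le k+1$ and $h_i\le 2N^{-1}$ everywhere, \eqref{eq:interpolation-theory} gives at once $\Vert S-S^I\Vert_{L^{\infty}(I_i)}\le Ch_i^{k+1}\le CN^{-(k+1)}$, and the $L^2$ and energy parts follow from $\sum_i h_i^{2k+1}\le(\max_i h_i)^{2k}\sum_i h_i\le CN^{-2k}$ together with $\varepsilon\le 1$. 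For $E$ on the fine subintervals $I_i$, $0\le i\le N/2-2$, I would insert $|E^{(k+1)}|\le C\varepsilon^{-(k+1)}e^{-x/\varepsilon}$ into \eqref{eq:interpolation-theory} and use \eqref{eq:layer-function-4} with $\mu=k+1$ (legitimate since $\sigma\ge k+1$): this gives $\Vert E-E^I\Vert_{L^{\infty}(I_i)}\le C\varepsilon^{-(k+1)}h_i^{k+1}\max_{I_i}e^{-x/\varepsilon}\le CN^{-(k+1)}$; factoring out $h_i^{1/2}$ from the $L^2$ estimate and summing with $\sum_i h_i\le 1$ yields $CN^{-(k+1)}$, and factoring out $h_i^{-1/2}$ from the $H^1$ estimate and summing with $\sum_i h_i^{-1}\le CN^2\varepsilon^{-1}$ (which follows from $h_i\ge h_0\ge\varepsilon N^{-1}$) gives $\varepsilon\sum_i\Vert(E-E^I)'\Vert_{I_i}^2\le CN^{-2k}$.

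On the coarse part $[x_{N/2},x_N]$ the exponential is already exponentially small, $\max_{[x_{N/2},x_N]}e^{-x/\varepsilon}=e^{-x_{N/2}/\varepsilon}\le C\varepsilon^{\sigma}$, so the same interpolation estimate with $h_i\le 2N^{-1}$, together with $\varepsilon\le N^{-1}$ and $\sigma\ge k+1$, makes every contribution there $O(\varepsilon^{\sigma})+O(N^{-(k+1)})$, hence harmless; combining this with the elementary bound $\Vert E^I\Vert_{L^{\infty}(I_i)}\le C\max_j|E(x_{i+j/k})|$ for a $k$th order Lagrange polynomial (and $|[x_{N/2},x_N]|\le 1$) gives the second inequality in \eqref{eq:interpolation-error-2}. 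The delicate point is the transition interval $I_{N/2-1}$, where $h_{N/2-1}$ may be of size $N^{-1}$ while $\varepsilon$ is exponentially small, so $h_{N/2-1}^{k+1}\varepsilon^{-(k+1)}$ is \emph{not} small and \eqref{eq:interpolation-theory} is useless --- this is precisely the reason $\Pi u$ must differ from $u^I$. Here I would instead use that $E$ itself is already $O(N^{-\sigma})$ on $I_{N/2-1}$: by \eqref{eq:regularity} and \eqref{eq:layer-function-1} all the values $E(x_{N/2-1}),E(x_{N/2-1+j/k}),E(x_{N/2})$ are $O(N^{-\sigma})$, so $\Vert E\Vert_{L^{\infty}(I_{N/2-1})}+\Vert E^I\Vert_{L^{\infty}(I_{N/2-1})}\le CN^{-\sigma}$, giving $\Vert E-E^I\Vert_{L^{\infty}(I_{N/2-1})}\le CN^{-\sigma}\le CN^{-(k+1)}$ and, after multiplication by $h_{N/2-1}^{1/2}$, the first inequality of \eqref{eq:interpolation-error-2}. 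For the energy norm on this interval I would write $\Vert(E-E^I)'\Vert_{I_{N/2-1}}\le\Vert E'\Vert_{I_{N/2-1}}+\Vert(E^I)'\Vert_{I_{N/2-1}}$, bound the first term by \eqref{eq:layer-function-2} and the second by the inverse inequality $\Vert(E^I)'\Vert_{I_{N/2-1}}\le Ch_{N/2-1}^{-1}\Vert E^I\Vert_{I_{N/2-1}}$ combined with \eqref{eq:interpolation-error-2} and $h_{N/2-1}\ge\tfrac12\sigma\varepsilon$; both give $\varepsilon\Vert\cdot\Vert_{I_{N/2-1}}^2\le CN^{-2\sigma}\le CN^{-2k}$. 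Adding the fine, coarse and transition contributions, using $\Vert\cdot\Vert_{L^2(\Omega)}\le\Vert\cdot\Vert_{L^{\infty}(\Omega)}$ (as $|\Omega|=1$), and repeating the decomposition for $u-u^I=(S-S^I)+(E-E^I)$, establishes \eqref{eq:interpolation-error-1}, \eqref{eq:interpolation-error-11}, \eqref{eq:interpolation-error-2} and \eqref{eq:interpolation-error-3}.

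For \eqref{eq:omega-energy-norm} I would note that $\mathcal{P}E$ is supported on $I_{N/2-2}\cup I_{N/2-1}$, is a polynomial of degree $\le k$ on each of these two subintervals, and satisfies $\Vert\mathcal{P}E\Vert_{L^{\infty}}\le C|E(x_{N/2-1})|\le CN^{-\sigma}$ by \eqref{eq:layer-function-1}. Hence $\Vert\mathcal{P}E\Vert\le CN^{-\sigma}(h_{N/2-2}+h_{N/2-1})^{1/2}\le CN^{-\sigma}$, while the inverse inequality on each of the two subintervals gives $\Vert(\mathcal{P}E)'\Vert_{I_j}\le Ch_j^{-1}\Vert\mathcal{P}E\Vert_{I_j}\le CN^{-\sigma}h_j^{-1/2}$; since $h_{N/2-2}\ge\tfrac14\sigma\varepsilon$ and $h_{N/2-1}\ge\tfrac12\sigma\varepsilon$ by \eqref{eq:mesh-3} and \eqref{eq:mesh-4}, this yields $\varepsilon|\mathcal{P}E|_1^2\le CN^{-2\sigma}$, and therefore $\Vert\mathcal{P}E\Vert_{\varepsilon}\le CN^{-\sigma}$.

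The main obstacle is the transition interval $I_{N/2-1}$: everything else is standard interpolation bookkeeping, but on that one element the $h^{k+1}$ estimate collapses and one must fall back on the raw smallness $|E|=O(N^{-\sigma})$ there, controlling $(E^I)'$ through an inverse inequality and the lower bound $h_{N/2-1}\ge\tfrac12\sigma\varepsilon$. A secondary point needing care is that in the fine region \eqref{eq:layer-function-4} must be applied with exponent $\mu=k+1$ (not $\mu=k$), so that $\sum_i h_i^{-1}\le CN^2\varepsilon^{-1}$ closes the energy estimate at exactly $CN^{-2k}$ instead of at $CN^{-2k}\ln N$.
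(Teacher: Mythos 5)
Your proposal is correct and follows essentially the same route as the paper: decompose $u=S+E$, use the classical interpolation estimate together with \eqref{eq:layer-function-4} in the fine region, fall back on the raw smallness of $E$ plus inverse inequalities on the transition interval $I_{N/2-1}$ and the coarse part, and bound $\mathcal{P}E$ directly from \eqref{eq:layer-function-1} and the lower bounds on $h_{N/2-2}$, $h_{N/2-1}$. The only (harmless) cosmetic difference is in the fine-region energy estimate, where you apply \eqref{eq:layer-function-4} with $\mu=k+1$ and compensate with $\sum_i h_i^{-1}\le CN^2\varepsilon^{-1}$, whereas the paper applies it with $\mu=(2k+1)/2$ and simply counts the $N/2$ terms; both close at $CN^{-2k}$.
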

\begin{proof}
We just consider Bakhvalov mesh \eqref{eq:Bakhvalov mesh-Roos} and   mesh  \eqref{eq:Bakhvalov mesh-Kopteva} can be similarly analyzed. 

From \eqref{eq:interpolation-theory} and \eqref{eq:regularity}, for $0\le i \le N/2-2$ one has
\begin{equation}\label{eq:E-EI-1}
\begin{split}
&\Vert E-E^I \Vert_{L^{\infty}(I_i)}\le  C h_i^{k+1} \vert E \vert_{W^{k+1,\infty}(I_i)}\\
\le & C\varepsilon^{-(k+1)}h_i^{k+1} e^{-x_i/\varepsilon}\le CN^{-(k+1)},
\end{split}
\end{equation}
where  we have used  \eqref{eq:layer-function-4} with $\mu=k+1$ and $\sigma\ge k+1$. For $N/2-1\le i \le N-1$ we have
\begin{equation}\label{eq:E-EI-2}
\Vert E-E^I \Vert_{L^{\infty}(I_i)}\le  \Vert E  \Vert_{L^{\infty}(I_i)}+\Vert  E^I \Vert_{L^{\infty}(I_i)}
\le Ce^{-x_i/\varepsilon}\le C N^{-\sigma}.
\end{equation}
Collecting \eqref{eq:E-EI-1}, \eqref{eq:E-EI-2} and noting $\sigma\ge k+1$, we prove $\Vert E-E^I \Vert_{L^{\infty}(\Omega)}\le C N^{-(k+1)}$. Lemma \ref{lem:Bakhvlov-mesh}, \eqref{eq:interpolation-theory} and \eqref{eq:regularity} yield
$\Vert S-S^I \Vert_{L^{\infty}(\Omega)}\le CN^{-(k+1)}$.   From \eqref{eq:decomposition} we prove \eqref{eq:interpolation-error-1}.
The bound \eqref{eq:interpolation-error-11} can be easily obtained from \eqref{eq:interpolation-error-1} and H\"{o}lder inequalities. 

From \eqref{eq:regularity} and direct calculations  one can easily prove \eqref{eq:interpolation-error-2}.

Now we are ready to analyze $\Vert E-E^I \Vert_{\varepsilon}$. 
First we  decompose $\varepsilon \Vert (E-E^I)' \Vert^2$ into the following two parts
\begin{equation}\label{eq:energy-IE-1}
\begin{split}
\varepsilon \Vert (E-E^I)' \Vert^2
=&\varepsilon \sum_{i=0}^{N/2-2}\Vert (E-E^I)'\Vert^2_{I_i} 
+\varepsilon \Vert  (E-E^I)'\Vert^2_{[x_{N/2-1},x_{N}]}\\
=:&\mathcal{S}_1+\mathcal{S}_2.
\end{split}
\end{equation}
From \eqref{eq:interpolation-theory}, \eqref{eq:regularity}, \eqref{eq:layer-function-4} with $\mu=(2k+1)/2$ and $\sigma\ge k+1$, we have
\begin{equation}\label{eq:energy-IE-2}
\begin{split}
 &\mathcal{S}_1\le C  \varepsilon \sum_{i=0}^{N/2-2} h_i^{2k} \vert E \vert_{k+1,I_i}^2
\le C \varepsilon  \sum_{i=0}^{N/2-2} h_i^{2k}\int_{ x_i }^{ x_{i+1} }\varepsilon^{-2(k+1)}e^{-2x/\varepsilon}\mr{d}x\\
\le &
C\varepsilon  \sum_{i=0}^{N/2-2}  h_i^{2k}\; \varepsilon^{-2(k+1)} e^{-2x_i/\varepsilon}h_i
\le
C\varepsilon  \sum_{i=0}^{N/2-2}  \varepsilon^{-2(k+1)} \left( h_i^{(2k+1)/2}e^{-x_i/\varepsilon} \right)^2\\
\le &
C\varepsilon  \sum_{i=0}^{N/2-2}  \varepsilon^{-2(k+1)}    \varepsilon^{2k+1} N^{-(2k+1)}\le C N^{-2k}.
\end{split}
\end{equation}
From a triangle inequality,  \eqref{eq:mesh-4}, \eqref{eq:mesh-5}, \eqref{eq:layer-function-2}, \eqref{eq:layer-function-22}, inverse inequality \cite[Theorem 3.2.6]{Ciarlet:2002-finite} and \eqref{eq:interpolation-error-2}, one has
\begin{equation}\label{eq:energy-IE-3}
\begin{split}
\mathcal{S}_2\le &C  \varepsilon \left( \Vert E' \Vert^2_{[x_{N/2-1},x_{N}]}+\Vert (E^I)' \Vert^2_{[x_{N/2-1},x_{N/2}]}+\Vert (E^I)' \Vert^2_{[x_{N/2},x_N]} \right)\\
\le &
C \varepsilon  (\varepsilon^{-1}N^{-2\sigma}+h^{-2}_{N/2-1} \Vert  E^I  \Vert^2_{[x_{N/2},x_N]} + N^2 \Vert  E^I  \Vert^2_{[x_{N/2},x_N]} )\\
\le &
C  N^{-2\sigma}+C\varepsilon^{2\sigma+1} N^2.
\end{split}
\end{equation}
Substituting \eqref{eq:energy-IE-2}, \eqref{eq:energy-IE-3} into \eqref{eq:energy-IE-1} and recalling $\varepsilon\le N^{-1}$ and $\sigma\ge k+1$, we obtain
$$
\varepsilon \Vert (E-E^I)' \Vert^2\le C N^{-2k}
$$
and prove $\Vert E-E^I\Vert_{\varepsilon}\le CN^{-k}$ from \eqref{eq:interpolation-error-11}.  From \eqref{eq:interpolation-theory} and Lemma \ref{lem:Bakhvlov-mesh}, one can easily prove $\Vert S-S^I\Vert_{\varepsilon}\le C(\varepsilon^{1/2} N^{-(k-1/2)}+N^{-(k+1/2)})$. A triangle inequality yields $\Vert u-u^I \Vert_{\varepsilon}\le CN^{-k}$.  Thus \eqref{eq:interpolation-error-3} is proved.

Now we consider \eqref{eq:omega-energy-norm}. Direct calculations yield  
\begin{align*}
\Vert (\mathcal{P} E)(x) \Vert_{\varepsilon}\le &|E(x_{N/2-1})| \Vert\theta_{N/2-1}(x) \Vert_{\varepsilon}+\sum_{j=1}^{k-1}|E(x_{N/2-1+j/k})| \Vert\theta_{N/2-1+j/k}(x) \Vert_{\varepsilon} \\
\le &
CN^{-\sigma} \sum_{j=0}^{k-1} \Vert\theta_{N/2-1+j/k}(x) \Vert_{\varepsilon}
\le  
C N^{-\sigma},
\end{align*}
where we have used \eqref{eq:omega}, \eqref{eq:layer-function-1}, \eqref{eq:mesh-3} and \eqref{eq:mesh-4}.
\end{proof}
\section{Uniform convergence}
Introduce $\chi:=\Pi u-u^N$.
From \eqref{eq:coercivity}, the Galerkin orthogonality, \eqref{eq:decomposition}, \eqref{eq:interpolant-u-10-27}, \eqref{eq:pi-E-1} and integration by parts for $\displaystyle\int_0^1b(\pi E-E)'\chi\mr{d}x$, one has
\begin{equation}\label{eq:uniform-convergence-1}
\begin{split}
&\alpha \Vert \chi \Vert_{\varepsilon}^2\le a(\chi,\chi)=a(\Pi u-u,\chi)\\
=&\varepsilon \int_0^1 (u^I-u)'\chi'\mr{d}x-\varepsilon \int_{0}^1(\mathcal{P} E)'\chi'\mr{d}x\\
&+\int_0^1b(\pi E-E) \chi'\mr{d}x-\int_0^1b(S^I-S)'\chi\mr{d}x\\
&+\int_0^1b' (\pi E-E)\chi\mr{d}x+\int_0^1c (u^I-u)\chi\mr{d}x-\int_0^1c (\mathcal{P} E) \chi\mr{d}x\\
&=:\mr{I}+\mr{II}+\mr{III}+\mr{IV}+\mr{V}+\mr{VI}+\mr{VII}.
\end{split}
\end{equation}
In the following we will analyze the terms in the right-hand side of \eqref{eq:uniform-convergence-1}. H\"{o}lder inequalities yield 
\begin{equation}\label{eq:I}
(\mr{I}+\mr{VI})+(\mr{II}+\mr{VII})\le C \Vert u-u^I \Vert_{\varepsilon}\Vert \chi \Vert_{\varepsilon}+ C \Vert \omega_E \Vert_{\varepsilon}\Vert \chi \Vert_{\varepsilon}
\le CN^{-k}\Vert \chi \Vert_{\varepsilon},
\end{equation}
where \eqref{eq:interpolation-error-3} and \eqref{eq:omega-energy-norm} have been used. From \eqref{eq:interpolation-theory} and \eqref{eq:regularity}, one has
$\Vert (S^I-S)'\Vert\le CN^{-k}$ and $\Vert \pi E-E \Vert\le CN^{-(k+1)}$ from \eqref{eq:interpolation-error-11} and \eqref{eq:omega-energy-norm}. Consequently we obtain
\begin{equation}\label{eq:IV}
 \mr{IV}+\mr{V} \le C (\Vert (S^I-S)'\Vert+\Vert \pi E-E \Vert)\Vert \chi \Vert    \le CN^{-k}\Vert \chi \Vert.
\end{equation}
 
We put the arguments for $\mr{III}$   in the following lemma.   
\begin{lemma}\label{lem:III}
 Let the mesh $\{x_i\}$ be either the Bakhvalov mesh \eqref{eq:Bakhvalov mesh-Roos} or the Bakhvalov mesh \eqref{eq:Bakhvalov mesh-Kopteva}. Let $\pi E$ be defined in \eqref{eq:interpolant-E-1}. Then one has
\begin{equation}\label{eq:lemma5}
|\mr{III}|=\left|\int_0^1b(\pi E-E)\chi'\mr{d}x\right|\le CN^{-k} \Vert \chi \Vert_{\varepsilon}.
\end{equation}

\end{lemma}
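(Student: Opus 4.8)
The plan is to split the integral $\mathrm{III}=\int_0^1 b(\pi E-E)\chi'\,\mathrm{d}x$ over the three mesh regions determined by the structure of $\pi E$: the fine region $[x_0,x_{N/2-2}]$, the transition interval $I_{N/2-1}=[x_{N/2-2},x_{N/2}]$ (where $\pi E\ne E^I$), and the coarse region $[x_{N/2},x_N]$. On the first and last regions \eqref{eq:pi-E-2} gives $\pi E-E=E^I-E$, so we are dealing with a genuine interpolation error; on the middle interval $\pi E-E = E^I - \mathcal{P}E - E$, which we control using $\mathcal{P}E$ and the smallness of $E$ itself there.

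First, on $[x_0,x_{N/2-2}]$ I would apply Cauchy--Schwarz as $|\int b(\pi E-E)\chi'| \le C\Vert\pi E-E\Vert_{[x_0,x_{N/2-2}]}\,\Vert\chi'\Vert \le C\varepsilon^{-1/2}\Vert E-E^I\Vert_{[x_0,x_{N/2-2}]}\,\Vert\chi\Vert_\varepsilon$, since $\Vert\chi'\Vert\le\varepsilon^{-1/2}\Vert\chi\Vert_\varepsilon$. The $L^2$ bound on $E-E^I$ over each $I_i$ comes from \eqref{eq:interpolation-theory} with $p=\infty$, $q=2$ together with \eqref{eq:regularity}: $\Vert E-E^I\Vert_{I_i}\le C h_i^{k+1}\varepsilon^{-(k+1)}h_i^{1/2}e^{-x_i/\varepsilon}$, and then \eqref{eq:layer-function-4} with $\mu=k+1$ (noting $h_i^{1/2}e^{-x_i/\varepsilon}\le C$ for $\sigma\ge k+1$, or absorbing the extra $h_i^{1/2}$) yields a geometric-type sum bounded by $C\varepsilon^{1/2}N^{-(k+1)}$. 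Hence this piece contributes $\le C\varepsilon^{-1/2}\cdot\varepsilon^{1/2}N^{-(k+1)}\Vert\chi\Vert_\varepsilon = CN^{-(k+1)}\Vert\chi\Vert_\varepsilon$, which is even better than needed. On the coarse region $[x_{N/2},x_N]$ I would instead bound $\Vert\pi E-E\Vert_{[x_{N/2},x_N]}\le \Vert E\Vert_{[x_{N/2},x_N]}+\Vert E^I\Vert_{[x_{N/2},x_N]}\le C\varepsilon^{\sigma}$ using \eqref{eq:regularity} and \eqref{eq:interpolation-error-2}, so this piece contributes $\le C\varepsilon^{-1/2}\varepsilon^\sigma\Vert\chi\Vert_\varepsilon\le CN^{-k}\Vert\chi\Vert_\varepsilon$ since $\varepsilon\le N^{-1}$ and $\sigma\ge k+1$.

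The main obstacle is the transition interval $I_{N/2-1}$, which is exactly where the classical Lagrange interpolant fails to give enough stability — this is the crux of the whole paper. Here $h_{N/2-1}$ can be as large as $2\sigma N^{-1}$ (see \eqref{eq:mesh-4}), so a crude bound $\varepsilon^{-1/2}\Vert\pi E-E\Vert_{I_{N/2-1}}\Vert\chi\Vert_\varepsilon$ would cost a factor $\varepsilon^{-1/2}$ that we cannot afford against only $\Vert E\Vert_{I_{N/2-1}}\sim\varepsilon^{1/2}N^{-\sigma}$ — actually that does work, giving $N^{-\sigma}$; the real danger is the derivative of the interpolant, not present here since we only see $\chi'$, not $(\pi E)'$. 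So on $I_{N/2-1}$ I would simply write $\pi E-E = -\mathcal{P}E + E^I - E$ (by \eqref{eq:pi-E-1}, on this interval $\pi E = E^I-\mathcal{P}E$), and bound $\Vert\pi E - E\Vert_{I_{N/2-1}}\le \Vert\mathcal{P}E\Vert_{I_{N/2-1}} + \Vert E^I\Vert_{I_{N/2-1}} + \Vert E\Vert_{I_{N/2-1}}$. By \eqref{eq:interpolation-error-2}, $\Vert E^I\Vert_{I_{N/2-1}}\le Ch_{N/2-1}^{1/2}N^{-\sigma}$; by \eqref{eq:layer-function-2}, $\Vert E\Vert_{I_{N/2-1}}\le C\varepsilon^{1/2}N^{-\sigma}$; and $\Vert\mathcal{P}E\Vert_{I_{N/2-1}}$ is bounded the same way as in the proof of \eqref{eq:omega-energy-norm} (using \eqref{eq:layer-function-1} and the local $L^2$ norms of the basis functions, which are $\le Ch_{N/2-1}^{1/2}$). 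Thus $\Vert\pi E-E\Vert_{I_{N/2-1}}\le Ch_{N/2-1}^{1/2}N^{-\sigma}\le CN^{-1/2}N^{-\sigma}$, and this piece contributes $\le C\varepsilon^{-1/2}h_{N/2-1}^{1/2}N^{-\sigma}\Vert\chi\Vert_\varepsilon$.

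Here I must be careful: $\varepsilon^{-1/2}h_{N/2-1}^{1/2}$ can be as large as $\varepsilon^{-1/2}\cdot(2\sigma N^{-1})^{1/2}=C\varepsilon^{-1/2}N^{-1/2}$, and since $\varepsilon$ can be much smaller than $N^{-1}$, this factor is unbounded. To avoid it I would \emph{not} pull out $\Vert\chi'\Vert$ globally on $I_{N/2-1}$ but rather use the sharper split $\Vert\chi'\Vert_{I_{N/2-1}}\le\varepsilon^{-1/2}\Vert\chi\Vert_{\varepsilon,I_{N/2-1}}$ localized to that interval, so the contribution is $\le C\varepsilon^{-1/2}\Vert\pi E-E\Vert_{I_{N/2-1}}\Vert\chi\Vert_{\varepsilon}$ with $\Vert\pi E-E\Vert_{I_{N/2-1}}\le C\varepsilon^{1/2}N^{-\sigma}$ — and for this I need $\Vert E^I\Vert_{I_{N/2-1}}$ and $\Vert\mathcal{P}E\Vert_{I_{N/2-1}}$ to carry a factor $\varepsilon^{1/2}$ rather than $h_{N/2-1}^{1/2}$; this is genuinely false in general, so the correct route is the one the authors surely intend: pull out $\varepsilon\Vert\chi'\Vert^2$ only once globally and note $\varepsilon^{-1/2}\Vert\pi E-E\Vert_{I_{N/2-1}}\le C\varepsilon^{-1/2}h_{N/2-1}^{1/2}N^{-\sigma}$, then use $h_{N/2-1}\le 2\sigma N^{-1}$ \emph{and} the possibility that in the regime where $h_{N/2-1}$ is large (i.e.\ $\sigma\varepsilon\gg N^{-1}$, equivalently $\varepsilon\gtrsim N^{-1}$) the mesh is quasi-uniform with $\varepsilon\sim N^{-1}$ anyway — or, more cleanly, bound $\varepsilon^{-1/2}h_{N/2-1}^{1/2}N^{-\sigma}\le \varepsilon^{-1/2}(\varepsilon^{1/2}N^{-1/2})N^{-\sigma}\cdot(\text{something})$ and exploit $N^{-\sigma}\le N^{-(k+1)}$ to absorb the loss: precisely, $\varepsilon^{-1/2}h_{N/2-1}^{1/2}\le \varepsilon^{-1/2}\cdot C N^{-1/2}$ but also $h_{N/2-1}\ge\tfrac12\sigma\varepsilon$ so $h_{N/2-1}^{1/2}\le C\varepsilon^{1/2}(h_{N/2-1}/\varepsilon)^{1/2}$ and $(h_{N/2-1}/\varepsilon)^{1/2}\le (2\sigma/(\varepsilon N))^{1/2}$ which is again bad. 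Given this tension, the step I expect to be the actual main obstacle — and the one the authors resolve by their choice of $\pi E$ — is showing that on $I_{N/2-1}$ the combination $\pi E-E$ has an $L^2$ norm of order $\varepsilon^{1/2}N^{-\sigma}$ (not merely $h_{N/2-1}^{1/2}N^{-\sigma}$), which holds because $\mathcal{P}E$ was designed to cancel exactly the part of $E^I$ supported on $I_{N/2-1}$, leaving only $-E$ there, and $\Vert E\Vert_{I_{N/2-1}}\le C\varepsilon^{1/2}N^{-\sigma}$ by \eqref{eq:layer-function-2}. Then $|\mathrm{III}|$ restricted to $I_{N/2-1}$ is $\le C\varepsilon^{-1/2}\varepsilon^{1/2}N^{-\sigma}\Vert\chi\Vert_\varepsilon = CN^{-\sigma}\Vert\chi\Vert_\varepsilon\le CN^{-k}\Vert\chi\Vert_\varepsilon$. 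Summing the three regional contributions gives $|\mathrm{III}|\le CN^{-k}\Vert\chi\Vert_\varepsilon$, as claimed.
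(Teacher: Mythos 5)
Your three-way decomposition of $\mr{III}$ over $[x_0,x_{N/2-2}]$, $[x_{N/2-2},x_{N/2}]$ and $[x_{N/2},x_N]$ is exactly the paper's, and your treatment of the two outer regions is sound (the paper handles the coarse region with the inverse inequality $\Vert \chi'\Vert_{[x_{N/2},x_N]}\le CN\Vert\chi\Vert$ rather than with $\varepsilon^{-1/2}\varepsilon^{\sigma}$, but both close). The genuine gap is in your final resolution of the transition region. You assert that $\mathcal{P}E$ ``cancels exactly the part of $E^I$ supported on $I_{N/2-1}$, leaving only $-E$ there.'' This is false: by \eqref{eq:interpolant-E-1} the operator $\pi$ retains the node $x_{N/2}$, so on $I_{N/2-1}=[x_{N/2-1},x_{N/2}]$ one has $\pi E=E(x_{N/2})\theta_{N/2}(x)$, not $0$, and hence $\pi E-E=E(x_{N/2})\theta_{N/2}-E$ there. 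The surviving term cannot be discarded by size of the basis function alone: $\Vert\theta_{N/2}\Vert_{I_{N/2-1}}\sim h_{N/2-1}^{1/2}$ with $h_{N/2-1}$ possibly of order $N^{-1}\gg\varepsilon$, so pairing it with $\Vert\chi'\Vert_{I_{N/2-1}}\le\varepsilon^{-1/2}\Vert\chi\Vert_{\varepsilon}$ costs an unbounded factor $(h_{N/2-1}/\varepsilon)^{1/2}$ if the coefficient were only $N^{-\sigma}$. What actually saves the argument --- and what your write-up is missing --- is the second bound in \eqref{eq:layer-function-1}, $|E(x_{N/2})|\le C\varepsilon^{\sigma}$, which is far smaller than $N^{-\sigma}$: then $\varepsilon^{-1/2}\,|E(x_{N/2})|\,\Vert\theta_{N/2}\Vert_{I_{N/2-1}}\le C\varepsilon^{\sigma-1/2}N^{-1/2}\le CN^{-\sigma}$ using $\varepsilon\le N^{-1}$. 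This is precisely the paper's estimate \eqref{eq:III-5}; the point of the construction is not that $\pi E$ vanishes on $I_{N/2-1}$ but that the only surviving nodal value sits at $x_{N/2}$, where $E$ has already decayed to $\mathcal{O}(\varepsilon^{\sigma})$.

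A second, smaller omission: your concluding paragraph silently drops the subinterval $I_{N/2-2}=[x_{N/2-2},x_{N/2-1}]$, where $\theta_{N/2-1}$ is also supported and $\pi E-E=(E^I-E)-E(x_{N/2-1})\theta_{N/2-1}$. The ``only $-E$ remains'' claim fails there as well, but the standard argument goes through because $h_{N/2-2}\le\sigma\varepsilon$ (so $\Vert\chi'\Vert_{L^1(I_{N/2-2})}\le C\varepsilon^{1/2}\Vert\chi'\Vert_{I_{N/2-2}}\le C\Vert\chi\Vert_{\varepsilon,I_{N/2-2}}$) together with $|E(x_{N/2-1})|\le CN^{-\sigma}$ and the usual interpolation bound for $E^I-E$; this is the paper's \eqref{eq:III-4}. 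Your earlier, abandoned attempt in the same paragraph (bounding $\Vert\mathcal{P}E\Vert+\Vert E^I\Vert+\Vert E\Vert$ in $L^2$) correctly diagnosed why the crude route loses a factor $(h_{N/2-1}/\varepsilon)^{1/2}$; the repair is the explicit identification of $\pi E$ on each of the two subintervals plus \eqref{eq:layer-function-1}, not a cancellation down to $-E$.
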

\begin{proof}
According to \eqref{eq:pi-E-2}, the term $(b(\pi E-E),\chi')$ is separated into three parts as follows:
\begin{equation}\label{eq:III-1}
\begin{split}
 \int_0^1b(\pi E-E)\chi'\mr{d}x 
=&\int_{x_0}^{x_{N/2-2}}b(E^I-E)\chi'\mr{d}x\\
&+\int_{x_{N/2-2}}^{x_{N/2}}b(\pi E-E)\chi'\mr{d}x
+\int_{x_{N/2}}^{x_{N}}b(E^I-E)\chi'\mr{d}x\\
=:&\mathcal{I}_1+\mathcal{I}_2+\mathcal{I}_3.
\end{split}
\end{equation}

From H\"{o}lder inequalities, \eqref{eq:interpolation-theory},  \eqref{eq:layer-function-4} with $\mu=k+1$ and $\sigma\ge k+1$, we obtain
\begin{equation}\label{eq:III-2}
\begin{split}
&|\mathcal{I}_1|\le C\sum_{i=0}^{N/2-3}\int_{x_i}^{ x_{i+1} }|E^I-E| |\chi'|\mr{d}x \\
\le &
C\sum_{i=0}^{N/2-3} \Vert E^I-E \Vert_{L^{\infty}(I_i)} \Vert \chi' \Vert_{L^{1}(I_i)}\\
\le &
C\sum_{i=0}^{N/2-3} h_i^{k+1} \varepsilon^{-(k+1)}e^{-x_i/\varepsilon} \cdot h^{1/2}_i\Vert \chi' \Vert_{ I_i } 
\le  
C\varepsilon^{1/2} \sum_{i=0}^{N/2-3} N^{-(k+1)}    \Vert \chi' \Vert_{ I_i }\\
\le &
C\varepsilon^{1/2} \left(\sum_{i=0}^{N/2-3} N^{-2(k+1)} \right)^{1/2}\left(\sum_{i=0}^{N/2-3} \Vert \chi' \Vert^2_{ I_i }\right)^{1/2}
\\
\le &
C N^{-(k+1/2)} \Vert \chi \Vert_{\varepsilon},
\end{split}
\end{equation}
where  \eqref{eq:mesh-1} and \eqref{eq:mesh-3} have been used.

From H\"{o}lder inequalities and inverse inequalities, one has
\begin{equation}\label{eq:III-3}
\begin{split}
&|\mathcal{I}_3|\le C \Vert E^I-E \Vert_{[x_{N/2},x_N]} \Vert \chi' \Vert_{[x_{N/2},x_N]} \\
\le &
C N^{-(k+1)}\cdot N  \Vert \chi \Vert_{[x_{N/2},x_N]} 
\le  
C N^{-k} \Vert \chi \Vert,
\end{split}
\end{equation}
where \eqref{eq:interpolation-error-11} has been used.

Now we analyze the term $\mathcal{I}_2$. Note  $\pi E=E^I-E(x_{N/2-1})\theta_{N/2-1}(x)$ on $[x_{N/2-2},x_{N/2-1}]$ and one has
\begin{equation}\label{eq:III-4}
\begin{split}
&\left|\int_{x_{N/2-2}}^{x_{N/2-1}} b(\pi E-E) \chi'\mr{d}x\right|\\
\le &
C\int_{x_{N/2-2}}^{x_{N/2-1}} |E^I-E| \; |\chi'|\mr{d}x+C|E(x_{N/2-1})|\int_{x_{N/2-2}}^{x_{N/2-1}} |\theta_{N/2-1}\chi'|\mr{d}x\\
\le &
C\left( \Vert E^I-E \Vert_{L^{\infty}(I_{N/2-2})}+|E(x_{N/2-1})| \right) \Vert \chi' \Vert_{L^1(I_{N/2-2})}\\
\le &
C(h_{N/2-2}^{k+1}\varepsilon^{-(k+1)}e^{-x_{N/2-2}/\varepsilon}+N^{-\sigma})\cdot h^{1/2}_{N/2-2} \Vert \chi' \Vert_{I_{N/2-2}}\\
\le &
C(N^{-(k+1)}+N^{-\sigma} )\Vert \chi \Vert_{\varepsilon, I_{N/2-2}}
\end{split}
\end{equation}
where \eqref{eq:interpolant-E-1}, H\"{o}lder inequalities, \eqref{eq:interpolation-theory}, \eqref{eq:layer-function-4} with $\mu=k+1$ and $\sigma\ge k+1$, \eqref{eq:mesh-3} have been used.  On $[x_{N/2-1},x_{N/2}]$, we have $\pi E=E(x_{N/2})\theta_{N/2}(x)$ from \eqref{eq:interpolant-E-1} and
\begin{equation}\label{eq:III-5}
\begin{split}
&\left|\int_{x_{N/2-1}}^{x_{N/2}} b(\pi E-E) \chi'\mr{d}x\right|\\
\le &
C|E(x_{N/2})|\int_{x_{N/2-1}}^{x_{N/2}} |\theta_{N/2}(x)|\; |\chi'|\mr{d}x+ C\int_{x_{N/2-1}}^{x_{N/2}} |E|\;|\chi'|\mr{d}x\\
\le &
C\left( \varepsilon^{\sigma} \Vert  \theta_{N/2} \Vert_{I_{N/2-1}}+C\Vert  E  \Vert_{I_{N/2-1}} \right) \Vert  \chi' \Vert_{I_{N/2-1}} \\
\le &
C\left( \varepsilon^{\sigma}  h_{N/2-1}^{1/2}+\varepsilon^{1/2}N^{-\sigma} \right) \Vert  \chi' \Vert_{I_{N/2-1}}\\
\le &
C(\varepsilon^{\sigma-1/2} N^{-1/2}+N^{-\sigma} )\Vert \chi \Vert_{\varepsilon, I_{N/2-1}},
\end{split}
\end{equation}
where H\"{o}lder inequalities, \eqref{eq:layer-function-1}, \eqref{eq:layer-function-2} and \eqref{eq:mesh-4} have been used.  From \eqref{eq:III-4} and \eqref{eq:III-5}  we prove 
\begin{equation}\label{eq:III-6}
|\mathcal{I}_2|\le CN^{-(k+1)}\Vert \chi \Vert_{\varepsilon},
\end{equation}
where $\varepsilon\le N^{-1}$ and $\sigma\ge k+1$ have been used.
Substituting \eqref{eq:III-2}, \eqref{eq:III-3} and \eqref{eq:III-6} into \eqref{eq:III-1}, we are done.
\end{proof}

Now we are in a position to present  the main result.  
\begin{theorem}\label{the:main result}
Let the mesh $\{x_i\}$ be either Bakhvalov mesh \eqref{eq:Bakhvalov mesh-Roos} or   Bakhvalov mesh \eqref{eq:Bakhvalov mesh-Kopteva} with $\sigma\ge k+1$. 
Let $u$ and $u^N$ be the solutions of \eqref{eq:SPP-1d} and \eqref{eq:FE-1d}, respectively.  Then one has
\begin{align}
 \Vert u-u^N \Vert_{\varepsilon}\le & CN^{-k}.\label{eq:u-uN}
\end{align}
\end{theorem}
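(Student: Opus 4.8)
The plan is to combine the coercivity bound from \eqref{eq:coercivity} with the error equation \eqref{eq:uniform-convergence-1} to estimate $\Vert \chi \Vert_{\varepsilon}$, and then close the argument with a triangle inequality. Concretely, I would start from
\[
\alpha \Vert \chi \Vert_{\varepsilon}^2 \le a(\chi,\chi) = a(\Pi u - u, \chi) = \mr{I} + \mr{II} + \mr{III} + \mr{IV} + \mr{V} + \mr{VI} + \mr{VII},
\]
where the decomposition into seven terms is exactly the one set up in \eqref{eq:uniform-convergence-1} (obtained from Galerkin orthogonality, the splitting $\Pi u = S^I + \pi E$, the identity \eqref{eq:pi-E-1}, and integration by parts on the convective term involving $(\pi E - E)'$, which is what converts a troublesome derivative onto $\chi$).

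Next I would invoke the three estimates already established in the excerpt: the H\"older-inequality bound \eqref{eq:I} giving $(\mr{I}+\mr{VI})+(\mr{II}+\mr{VII}) \le CN^{-k}\Vert \chi \Vert_{\varepsilon}$ (using $\Vert u - u^I\Vert_{\varepsilon} \le CN^{-k}$ from \eqref{eq:interpolation-error-3} and $\Vert \mathcal{P}E\Vert_{\varepsilon} \le CN^{-\sigma} \le CN^{-k}$ from \eqref{eq:omega-energy-norm}); the bound \eqref{eq:IV} giving $\mr{IV}+\mr{V} \le CN^{-k}\Vert \chi \Vert$ (from standard interpolation of $S$ and the $L^2$ bound on $\pi E - E$); and Lemma \ref{lem:III} giving $|\mr{III}| \le CN^{-k}\Vert \chi \Vert_{\varepsilon}$. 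Summing these and using $\Vert \chi \Vert \le \Vert \chi \Vert_{\varepsilon}$ yields $\alpha \Vert \chi \Vert_{\varepsilon}^2 \le CN^{-k}\Vert \chi \Vert_{\varepsilon}$, hence $\Vert \chi \Vert_{\varepsilon} = \Vert \Pi u - u^N \Vert_{\varepsilon} \le CN^{-k}$.

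Finally I would write $u - u^N = (u - \Pi u) + \chi$ and bound the first summand by $\Vert u - \Pi u \Vert_{\varepsilon} = \Vert (u - u^I) + \mathcal{P}E \Vert_{\varepsilon} \le \Vert u - u^I \Vert_{\varepsilon} + \Vert \mathcal{P}E \Vert_{\varepsilon} \le CN^{-k} + CN^{-\sigma} \le CN^{-k}$, again using \eqref{eq:interpolation-error-3}, \eqref{eq:omega-energy-norm}, \eqref{eq:pi-E-1}, and $\sigma \ge k+1$. A triangle inequality then gives \eqref{eq:u-uN}.

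Since all the genuinely delicate work — the choice of the modified interpolant $\Pi u$, the stability of $\mathcal{P}E$ in the energy norm, and above all the treatment of the convective term $\mr{III}$ on the transition interval $I_{N/2-1}$ in Lemma \ref{lem:III} — has already been carried out earlier in the excerpt, the proof of the theorem itself is essentially an assembly step. The only point requiring a little care is making sure the term $\mr{III}$ is handled via the already-proven Lemma \ref{lem:III} rather than naively (a direct Cauchy--Schwarz on $b(\pi E - E)\chi'$ would lose a power of $N$, which is precisely the classical obstruction on Bakhvalov meshes), and remembering that $\mr{IV}+\mr{V}$ is bounded in the weaker norm $\Vert \chi \Vert$, which is harmless since $\Vert \chi \Vert \le \Vert \chi \Vert_{\varepsilon}$. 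So I do not anticipate a substantive obstacle in the theorem's proof; the mathematical heart of the paper lies in Lemma \ref{lem:III} and in the interpolation-error estimates of Lemma \ref{lem:interpolation-error}.
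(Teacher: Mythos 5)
Your proposal is correct and follows essentially the same route as the paper: coercivity plus the decomposition \eqref{eq:uniform-convergence-1}, the bounds \eqref{eq:I}, \eqref{eq:IV} and Lemma \ref{lem:III} to get $\Vert \Pi u-u^N\Vert_{\varepsilon}\le CN^{-k}$, and then a triangle inequality using \eqref{eq:interpolation-error-3} and \eqref{eq:omega-energy-norm}. The only (immaterial) difference is that you split $u-u^N$ through $\Pi u$ directly, whereas the paper passes through $u^I$ as the intermediate function; the same two estimates are used either way.
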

\begin{proof}
Substituting \eqref{eq:I}, \eqref{eq:IV} and  \eqref{eq:lemma5}  into \eqref{eq:uniform-convergence-1}, we obtain   $\Vert \Pi u-u^N \Vert_{\varepsilon}\le C N^{-k}$.
From  \eqref{eq:pi-E-1} and \eqref{eq:omega-energy-norm} we have $\Vert u^I-u^N \Vert_{\varepsilon}\le \Vert \Pi u-u^N \Vert_{\varepsilon}+\Vert (\mathcal{P} E) \Vert_{\varepsilon}\le C N^{-k}$.
From a triangle  inequality and  \eqref{eq:interpolation-error-3}, one has
$$
\Vert u-u^N \Vert_{\varepsilon}
\le   \Vert u-u^I \Vert_{\varepsilon}+\Vert u^I-u^N \Vert_{\varepsilon}\le  
 C N^{-k}.
$$
Thus we are done.
\end{proof}
\begin{remark}
For the original Bakhvalov mesh \cite{Bakhvalov:1969-Towards}, Theorem \ref{the:main result} also holds true because of the property \eqref{eq:original-Bakhvalov-mesh}.
The   analysis is similar to one on the mesh \eqref{eq:Bakhvalov mesh-Kopteva}. 
\end{remark}
\section{Numerical experiments}\label{sec:numerical experiments}
We now present the results of some numerical experiments in order to illustrate the conclusions of Theorem \ref{the:main result}, and to check if they are sharp. All calculations were carried out by using Intel Visual FORTRAN 11 and  the 
discrete problems were solved by the LU factorization.

The following boundary value problem is considered
\begin{equation}\label{eq:test}
\begin{split}
-\varepsilon u''-(3-x)u'+u=&f(x)\quad \text{in $\Omega=(0,1)$},\\
u(0)=u(1)=&0, 
\end{split}
\end{equation}
where the right-hand side  $f$ is chosen such that
\begin{equation}\label{eq:problem for numerical tests-1}
u(x)=(1-x)(1-e^{-2 x/\varepsilon})=1-x-e^{-2 x/\varepsilon}+xe^{-2 x/\varepsilon}.
\end{equation}
is the exact solution. The solution \eqref{eq:problem for numerical tests-1} exhibits typical boundary layer behavior.  

 For our numerical experiments we   consider $\varepsilon=10^{-4},10^{-5},\cdots,10^{-9}$, $k=1,2,3,4$ and $N=8,16,\cdots $. For both Bakhvalov meshes \eqref{eq:Bakhvalov mesh-Roos} and \eqref{eq:Bakhvalov mesh-Kopteva} we take $\sigma=k+1$. Set $C_1=5(k+1)/4$ in \eqref{eq:Bakhvalov mesh-Kopteva}.

We estimate the uniform errors for a fixed $N$ by taking the maximum error over a wide range of $\varepsilon$, namely
\begin{align*}
e^N:=&\max\limits_{\varepsilon=10^{-4},10^{-5},\ldots,10^{-9}}\Vert u-u^N \Vert_{\varepsilon}.
\end{align*}
Rates of convergence $r^N_e$   are computed by means of the formula
$$
r^N_e=\log_2(e^N/e^{2N}).
$$

The numerical results are presented in Tables \ref{table:1} and \ref{table:2}.  The errors $e^N$ and the convergence rates $r^N_e$ are in accordance with   Theorem \ref{the:main result} and illustrate its sharpness.  
 Moreover, in Tables \ref{table:1} and \ref{table:2} we can observe that  Bakhvalov mesh \eqref{eq:Bakhvalov mesh-Roos} gives almost the same  performance as   Bakhvalov mesh \eqref{eq:Bakhvalov mesh-Kopteva}.

\begin{table}[!htbp]
\caption{Errors and convergence rates for problem \eqref{eq:test}   } 
\footnotesize
\begin{tabular*}{\textwidth}{@{\extracolsep{\fill}}c | cc | cc || cc | cc | }
\hline
  &\multicolumn{4}{c}{ $k=1$ }     &\multicolumn{4}{c}{ $k=2$ }\\
\hline
  &\multicolumn{2}{c}{B-mesh \eqref{eq:Bakhvalov mesh-Roos} }     &\multicolumn{2}{c}{ B-mesh \eqref{eq:Bakhvalov mesh-Kopteva} } &\multicolumn{2}{c}{B-mesh \eqref{eq:Bakhvalov mesh-Roos} }    &\multicolumn{2}{c}{ B-mesh \eqref{eq:Bakhvalov mesh-Kopteva} }\\
\hline
$N$   &  $e^N$    &  $r^N_e$  & $e^N$    &  $r^N_e$     &  $e^N$   & $r^N_e$ & $e^N$ & $r^N_e$    \\
\hline
8    &0.338E+00     &    1.02 &0.339E-01  &1.02  &0.103E+00  &2.00  & 0.103E+00    &    2.00\\
16     & 0.167E+00     &    1.00 &0.167E+00   &1.00&0.257E-01 &2.00   & 0.257E-01     &    2.00 \\
32    & 0.834E-01     &   1.00 &0.834E-01 &1.00 &0.642E-02 &2.00  & 0.642E-02    &    2.00 \\
64     & 0.417E-01     &    1.00  &0.417E-01  &1.00&0.160E-02  &2.00   & 0.160E-02    &    2.00 \\
128    & 0.208E-01    &    1.00  &0.208E-01  &1.00  &0.401E-03  &2.00   & 0.401E-03    &    2.00\\
256    & 0.104E-01     &    1.00 &0.104E-01 &1.00  &0.100E-03 &1.99   & 0.100E-03     &    1.99 \\
512     & 0.521E-02     &    1.00 &0.521E-02  &1.00 &0.251E-04  &2.00  & 0.251E-04   &    2.00 \\
1024   & 0.260E-02     &    1.00  &0.260E-02  &1.00  &0.627E-05  &2.00   & 0.627E-05     &    2.00\\
2048    & 0.130E-02 & --- &0.130E-02   &  --- &0.157E-05   &  ---      &0.157E-05   & --- \\
\hline
\end{tabular*}
\label{table:1}
\end{table}

\begin{table}[!htbp]
\caption{Errors and convergence rates for problem \eqref{eq:test}   } 
\footnotesize
\begin{tabular*}{\textwidth}{@{\extracolsep{\fill}}c | cc | cc || cc | cc | }
\hline
  &\multicolumn{4}{c}{ $k=3$ }     &\multicolumn{4}{c}{ $k=4$ }\\
\hline
  &\multicolumn{2}{c}{B-mesh \eqref{eq:Bakhvalov mesh-Roos} }     &\multicolumn{2}{c}{ B-mesh \eqref{eq:Bakhvalov mesh-Kopteva} } &\multicolumn{2}{c}{B-mesh \eqref{eq:Bakhvalov mesh-Roos} }    &\multicolumn{2}{c}{ B-mesh \eqref{eq:Bakhvalov mesh-Kopteva} }\\
\hline
$N$   &  $e^N$    &  $r^N_e$  & $e^N$    &  $r^N_e$     &  $e^N$   & $r^N_e$ & $e^N$ & $r^N_e$    \\
\hline
8    &0.301E-01     &    2.94 &0.301E-01  &2.94  &0.898E-02  &3.88  & 0.898E-02    &    3.88 \\
16     & 0.393E-02     &    2.99 &0.393E-02   &2.99&0.609E-03 &3.97   & 0.609E-03     &    3.97 \\
32    & 0.496E-03     &   3.00 &0.496E-03 &3.00 &0.388E-04 &3.99  & 0.388E-04    &    3.99 \\
64     & 0.622E-04     &    3.00  &0.622E-04  &3.00&0.244E-05  &4.00   & 0.244E-05    &    4.00 \\
128    & 0.778E-05    &    3.00  &0.778E-05  &3.00  &0.153E-06  &4.00   & 0.153E-06    &    4.00\\
256    & 0.973E-06     &    3.00 &0.973E-06 &3.00  &0.954E-08 &4.00  & 0.954E-08     &    4.00 \\
512     & 0.122E-06     &    3.00 &0.122E-06  &3.00 &0.596E-09  &3.84  & 0.596E-09   &    3.79 \\
1024   & 0.152E-07     &    ---  &0.152E-07  &---  &0.415E-10  &---   & 0.430E-10     &    ---\\
\hline
\end{tabular*}
\label{table:2}
\end{table}


%







%
%

\end{document}